\newcommand\mbb{\mathbb}
\newcommand\mfr{\mathfrak}
\newcommand\fG{\mfr{G}}
\newcommand\N{\mbb{N}}
\DeclareMathOperator{\cpsdrank}{cpsd-rank}
\DeclareMathOperator{\cprank}{cp-rank}
\DeclareMathOperator{\diam}{diam}
\DeclareMathOperator{\Diag}{Diag}
\DeclareMathOperator{\tr}{Tr}
\theoremstyle{plain}
\newcounter{count}
\newtheorem{Thm}[count]{Theorem}
\newtheorem{Cor}[count]{Corollary}
\newtheorem{Lemma}[count]{Lemma}
\newtheorem*{Thm*}{Theorem}
\newtheorem*{Prop*}{Proposition}
\newtheorem*{Cor*}{Corollary}
\newtheorem*{Lemma*}{Lemma}
\newtheorem*{Sublemma*}{Sublemma}
\newtheorem*{Conjecture*}{Conjecture}
\theoremstyle{definition}
\newtheorem{Def}[count]{Definition}
\newtheorem{Example}[count]{Example}
\newtheorem{Remark}[count]{Remark}
\newtheorem*{Construction*}{Construction}
\newtheorem*{Def*}{Definition}
\newtheorem*{Defs*}{Definitions}
\newtheorem*{Example*}{Example}
\newtheorem*{Examples*}{Examples}
\newtheorem*{LemmaDef*}{Lemma and Definition}
\newtheorem*{Notation*}{Notation}
\newtheorem*{Problem*}{Problem}
\newtheorem*{Question*}{Question}
\newtheorem*{Remark*}{Remark}
\newtheorem*{Remarks*}{Remarks}
\newtheorem*{Warning*}{Warning}
\title{Approximate Completely Positive Semidefinite Factorizations and Their Ranks}
\author{Paria Abbasi}
\address{University of Innsbruck, Department of Mathematics, Austria}
\author{Andreas Klingler}
\address{University of Innsbruck, Institute for Theoretical Physics, Austria}
\author{Tim Netzer}
\address{University of Innsbruck, Department of Mathematics, Austria}
\date{\today}                                           % Activate to display a given date or no date
\keywords{completely positive, completely positive semidefinite, approximate rank, Johnson-Lindenstrauss Lemma, Carath\'{e}odory}
\subjclass[2000]{15A23, 15B48, 65F55}
\begin{document}
\maketitle

\begin{abstract}In this paper we show the existence of approximate \textit{completely positive semidefinite (cpsd)} factorizations with a cpsd-rank bounded above (almost) independently from the cpsd-rank of the initial matrix. This is particularly relevant since  the cpsd-rank of a matrix cannot, in general, be upper bounded by a function only depending on its size.

For this purpose, we make use of the Approximate Carath\'eodory Theorem in order to construct an approximate matrix with a low-rank  Gram representation. We then employ the Johnson-Lindenstrauss Lemma to improve to a logarithmic dependence of the  cpsd-rank on the size.

\end{abstract}

\section{Introduction}
The scalar product of two vectors in the nonnegative orthant $\mathbb R_+^d$ is always nonnegative. If $n$ nonnegative vectors $v_1,\ldots, v_n$ are given, the (entrywise nonnegative) matrix $$M=\left(\langle v_i,v_j\rangle\right)_{i,j=1,\ldots, n}$$ is called a {\it completely positive matrix}. So the possible combinations of inner products between tuples of nonnegative vectors are encoded in the convex cone of all  completely positive matrices.  
The cone of completely positive matrices has numerous applications in control theory and general optimization, among others.  It has been intensively studied, see for example \cite{ber}. 

Replacing nonnegative vectors by positive semidefinite matrices $A_1, \ldots, A_n$ and the inner product by the trace inner product, one obtains a non-commutative analogue $$M = \left(\tr(A_i A_j)\right)_{i,j = 1, \ldots, n},$$ called a \emph{completely positive semidefinite matrix}. Completely positive semidefinite matrices include completely positive ones since the nonnegative vectors of the latter decomposition embed as diagonal psd matrices in the former decomposition. We call the cone of all matrices attaining such a factorization the cone of completely positive semidefinite matrices. These  cones allow for a conic optimization approach to quantum correlations and quantum graph colorings, for example \cite{lau, pra, si}. Note that the trace inner product of two psd matrices is always nonnegative, so both completely positive and completely positive semidefinite matrices have nonnegative entries.

The non-commutative/quantum analogue is more challenging to understand than the commutative/classical version. For example, the cone of completely positive matrices is closed, whereas the cone of completely positive semidefinite matrices has recently been shown not to be closed if the matrix size is $n \geq 10$ \cite{sl, dy, pra}. Also, to  both kinds of decompositions there is an associated rank,  measuring the minimal size of the nonnegative vectors/positive semidefinite matrices that are necessary to represent the given matrix. Whereas the {\it completely positive rank} can be bounded in terms of the size $n$ alone, this fails for the {\it completely positive semidefinite rank}. This significant difference serves as the main motivation for this paper. 

In this work, we show that every completely positive semidefinite matrix attains an approximation of completely positive semidefinite matrices of relatively small completely positive semidefinite rank. This rank of the approximation depends on the size $n$, the accuracy of the approximation, and a certain complexity of the initial matrix. However, most importantly, it does not depend on its completely positive semidefinite rank. We provide two such results, one being better for fixed approximation error and $n$ very large, the other better for fixed $n$ and  small approximation error.

The main ingredients of our proof are the {\it Approximate Carath\'eodory Theorem} and the {\it Johnson-Lindenstrauss Lemma}. We will first approximate the initial matrix, using the Approximate Carath\'eodory Theorem. This first step will already establish the first upper bound. In a second step, we will then further improve the approximation  by applying the Johnson-Lindenstrauss Lemma to the eigendecomposition of the positive semidefinite matrices in the representation. This will reduce the linear dependence on $n$ of the upper bound  to a logarithmic dependence, and so establish the second upper bound. We only show the existence of an approximate decomposition and do not explicitly construct an approximation, since the Johnson-Lindenstrauss Lemma follows from a probabilistic existence argument.

Section \ref{Sec:Not} contains the essential preliminary material and reviews known results about the non-closedness of the cone of completely positive semidefinite matrices. Section \ref{Sec:Main} then contains our main result and some examples.

\section{Notations and preliminaries}\label{Sec:Not}
We will first state some basic definitions and results used throughout this paper. 
Let $[n]$ be the set $\{1, \ldots, n\}$ and $\mathcal{S}^n$ be the space of all $n \times n$ real symmetric matrices (i.e.\ $A^{t} = A$) endowed with the trace inner product: $$\langle A, B\rangle=\text{tr}(BA)=\sum_{i,j=1}^{n}A_{ij}B_{ij}.$$ The corresponding norm is known as the $\text{Frobenius}$ norm: $\Vert A \Vert_{\mathcal{F}}=\sqrt{\langle A, A\rangle}.$ 

A non-empty subset $\mathcal{C} \subseteq \mathcal{S}^n$ is called a \textit{convex cone} if it is closed under nonnegative linear combinations, i.e.\ for all $ \alpha, \alpha^\prime \geqslant 0$ and for all $ c, c^\prime \in \mathcal{C}$ we have $\alpha c+\alpha^\prime c^\prime \in \mathcal{C}.$ We call $\mathcal{C}$ {\it pointed}, if $\mathcal{C} \cap -\mathcal{C} = \{0\}$; we call it {\it full-dimensional} if it has non-empty interior and we call it {\it closed}, if it is a closed set in the Euclidean topology. We call a convex cone with these three properties a {\it proper} cone.
Given a convex cone $\mathcal{C} \subseteq \mathcal{S}^n,$ its dual cone is defined as $$\mathcal{C}^* \coloneqq \{A \in \mathcal{S}^n: \langle A, B\rangle \geqslant 0 \quad \textrm{for all } B \in \mathcal{C}\}$$ and this is always  a closed convex cone. 

A real symmetric matrix $A \in \mathcal{S}^n$ is called \textit{positive semidefinite} ({\it psd} for short, and denoted by $A \succcurlyeq0$)  if there exist vectors $v_1,\ldots, v_n\in \mathbb{R}^d, $ for some $d \in \mathbb{N},$ such that $A=(\langle v_i, v_j \rangle)_{i,j=1}^{n}$, where $\langle\cdot,\cdot\rangle$ denotes the standard inner product on $\mathbb R^d$. We also say that the vectors $v_1,\ldots, v_n$ form a \textit{Gram representation} of $A.$ It is well-known that the smallest possible $d$ in a Gram representation of $A$ coincides with its usual matrix rank ${\rm rank}(A)$.
We denote by $\mathcal{S}_+^{n}$ the set of all $n \times n$ positive semidefinite matrices and it is well-known that it is a proper and self-dual cone, i.e., $\mathcal{S}_+^n=(\mathcal{S}_+^{n})^*.$ 

A real symmetric matrix $A \in \mathcal{S}^n$ is called \textit{doubly nonnegative} if it is both positive semidefinite and entrywise nonnegative. So this means that it  admits a Gram representation by vectors $v_1,\ldots, v_n\in \mathbb{R}^d,$  for which the pairwise angles between the $v_i$ do not exceed $\pi/2,$ i.e.\ $\langle v_i, v_j\rangle \geqslant0$ for all $i, j \in [n].$  The set of all $n \times n$ doubly nonnegative matrices is known to form a proper cone, which is denoted by $\mathcal{DNN}^n.$ 

A real symmetric matrix $A \in \mathcal{S}^n$ which has a Gram representation by  entrywise nonnegative vectors $v_1,\ldots, v_n\in \mathbb{R}^d_{+},$ for some $d \in \N,$ is called \textit{completely positive} ({\it cp} for short). The smallest possible such $d$ is known as the {\it cp-rank } of $A$. The best known upper bound on the  cp-rank of an $n\times n$ cp-matrix is  $\frac{(n+1)n}{2} - 1$ (see \cite{sb2}) while it is not known if this upper bound is tight (see \cite[Conjecture 3.1]{ber}). The difference of this upper bound to the best known lower bound has recently been improved to $\mathcal O(n\log\log n)$, see \cite{sch}. The set of all $n \times n$ completely positive matrices also forms a proper cone, denoted by $\mathcal{CP}^n$.   The structure of the cone $\mathcal{CP}^n$ has been extensively studied (see for example \cite{ber}). 

For a nonnegative vector $x_i$ we denote by $D_i = \mathrm{Diag}(x_i)$ the diagonal psd matrix whose diagonal entries are the entries of $x_i$. We then obtain
$$(\langle x_i, x_j\rangle)_{i,j=1}^{n}=(\langle D_i, D_j\rangle)_{i,j=1}^{n},$$
where we use the trace inner product on the right hand side. In particular, every $D_i$ is a positive semidefinite matrix. At this point, it is natural to pass from diagonal psd matrices to general psd matrices. In this way, we obtain the cone of $\textit{completely positive semidefinite}$ matrices.
\begin{Def}
A matrix $A \in \mathcal{S}^n$ is called \textit{completely positive semidefinite} ({\it cpsd} for short), if it admits a Gram representation by psd matrices $A_1,\ldots, A_n \in  \mathcal{S}^d_+$, for some $d \geqslant 1,$ meaning $$A=\left(\langle A_i,A_j\rangle\right)_{i,j=1}^n.$$ We denote the set of all $n \times n$ completely positive semidefinite matrices by $\mathcal{CPSD}^n$.  
\end{Def} 

Note that the set $\mathcal{CPSD}^n$ forms a convex cone, similarly to $\mathcal{CP}^n$ \cite{dy}.

By definition, every completely positive matrix is also completely positive semidefinite. Moreover, since the trace inner product of two psd matrices is nonnegative and every psd matrix in the Gram representation of a cpsd matrix can be considered as a vector (by stacking the columns on top of each other), we obtain the following inclusions:
\begin{equation}
\mathcal{CP}^n \subseteq \mathcal{CPSD}^n \subseteq \mathcal{DNN}^n.
\end{equation}
In addition, the cone $\mathcal{CPSD}^n$ is pointed and full-dimensional, which directly follows from the facts that the cone $\mathcal{DNN}^n$ is pointed and the cone $\mathcal{CP}^n$ is full-dimensional, respectively. One of the challenging questions about the cone $\mathcal{CPSD}^n$ is whether it is closed. For $n \leqslant 4,$ it is known that $\mathcal{CP}^n=\mathcal{CPSD}^n=\mathcal{DNN}^n$ (first proven in \cite{max}), and hence $\mathcal{CPSD}^n$ is closed. On the other hand, for $n \geqslant 5$ both inclusions above are strict (see \cite{faw} and \cite{fre}). Furthermore, by characterization of the closure of $\mathcal{CPSD}^n$-cone given in \cite{bu}, together with the example in \cite{fre}, the chain of inclusions above can even be refined to (also shown in \cite{lau})
	$$\mathcal{CP}^n \subsetneq \mathcal{CPSD}^n \subseteq \text{cl}(\mathcal{CPSD}^n) \subsetneq \mathcal{DNN}^n.$$
A recent breakthrough is \cite{sl}, where it is shown that a certain affine section of the completely positive semidefinite cone is not closed, and hence the same holds for the cone $\mathcal{CPSD}^n$ itself, for  $n \geqslant 1942$. The lower bound on $n$ was improved in \cite{dy}, where it was subsequently shown that $\mathcal{CPSD}^n$ is not closed even for $n \geqslant 10.$ Hence, it remains an open problem whether the cone is closed for $n \in \{5, 6, 7, 8, 9\}.$

As the $\mathcal{CPSD}^n$-cone is a generalization of the $\mathcal{CP}^n$-cone, it is also natural to extend the notion of rank in the latter cone by replacing the nonnegative vectors with psd matrices.
\begin{Def}
The {\it completely positive semidefinite rank} of a matrix $A \in \mathcal{CPSD}^n,$ denoted by {\rm cpsd-rank}($A$), is the smallest  $d \geqslant 1$ for which there exist psd matrices $A_1, \ldots, A_n \in \mathcal{S}^d_+$  such that 
$$A = (\langle A_i,A_j\rangle )_{i,j=1}^{n}.$$
\end{Def}

\begin{Remark} Instead of using real symmetric psd matrices $A_1,\ldots, A_n\in \mathcal S^d_+$ in the Gram representation of cpsd-matrices, one can also use complex Hermitian matrices $A_1,\ldots, A_n\in\mathcal H^d_+$. This gives rise to the same notion of cpsd-matrices, only decreases the cpsd-rank by a factor of at most two. This can be seen by using the isometry
$$ \mathcal{H}^{d} \longrightarrow \mathcal{S}^{2d}; \quad M\longmapsto \frac{1}{\sqrt{2}}\begin{pmatrix}
\text{Re}(M)&&-\text{Im}(M)\\
\text{Im}(M)&&\text{Re}(M)
\end{pmatrix}$$
which preserves positive semidefiniteness. We will restrict to the case of real symmetric psd matrices from now on. \end{Remark}

In the following, we introduce another notion of rank for cpsd-matrices, which will be used in the proof of our main result.

\begin{Def}
Let $A \in \mathcal{CPSD}^n.$ We define the {\it Gram-cpsd-rank} of $A$ (denoted cpsd-rank$_{\fG}(A)$) as the smallest $r \geqslant 1$ for which there exists a Gram representation $A_1,\ldots, A_n \in \mathcal{S}_+^d,$ for some $d \in \mathbb{N},$ with rank$(A_i)\leqslant r$ for all $i\in[n]$. 
\end{Def} 

The next lemma shows the relationship between the two notions of cpsd-ranks. The result is similar to Lemma 2.1 in \cite{san} and Lemma 3.1 in \cite{pra}, we include a proof for completeness.

\begin{Lemma}\label{lmm:1}
Let $A \in \mathcal{CPSD}^n.$ The following chain of inequalities holds:
$$\cpsdrank_{\fG}(A) \leqslant \cpsdrank(A) \leqslant n\cdot\cpsdrank_{\fG}(A).$$
\end{Lemma}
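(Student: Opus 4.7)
The plan is to prove the two inequalities separately, with the first being essentially tautological and the second requiring a change-of-basis argument.

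For the first inequality $\text{cpsd-rank}_{\fG}(A) \leqslant \text{cpsd-rank}(A)$, I would simply take any Gram representation $A_1,\ldots,A_n\in\mathcal{S}^d_+$ realizing $d=\text{cpsd-rank}(A)$. Each $A_i$ lives in $\mathcal{S}^d_+$, so automatically $\text{rank}(A_i)\leqslant d$. Hence this same representation certifies $\text{cpsd-rank}_{\fG}(A)\leqslant d$.

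For the second inequality $\text{cpsd-rank}(A) \leqslant n\cdot \text{cpsd-rank}_{\fG}(A)$, let $r=\text{cpsd-rank}_{\fG}(A)$ and pick a Gram representation $A_1,\ldots,A_n\in\mathcal{S}^d_+$ with $\text{rank}(A_i)\leqslant r$ for all $i$. The key observation is that the $A_i$ only ``see'' the subspace
$$ V \coloneqq \sum_{i=1}^n \text{range}(A_i)\subseteq\mathbb{R}^d,$$
which satisfies $\dim V\leqslant\sum_{i=1}^n\text{rank}(A_i)\leqslant nr$. I would then pick an orthonormal basis of $V$, assembled into the columns of a matrix $U\in\mathbb{R}^{d\times\dim V}$ with $U^tU=I_{\dim V}$, and define
$$\wt{A}_i \coloneqq U^t A_i U\in\mathcal{S}_+^{\dim V}.$$
Since $\text{range}(A_i)\subseteq V$, we have $A_i=UU^t A_i UU^t = U\wt A_i U^t$, and cyclicity of the trace together with $U^tU=I$ yields
$$\langle A_i,A_j\rangle = \text{tr}\bigl(U\wt A_i U^t U\wt A_j U^t\bigr) = \text{tr}\bigl(\wt A_i\wt A_j\bigr) = \langle\wt A_i,\wt A_j\rangle.$$
Therefore $\wt A_1,\ldots,\wt A_n$ form a Gram representation of $A$ in $\mathcal{S}_+^{\dim V}$, proving $\text{cpsd-rank}(A)\leqslant \dim V\leqslant nr$.

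Neither step is particularly delicate; the only thing one has to be careful about is verifying that the isometric restriction to $V$ preserves the trace inner product, which boils down to the cyclic trace identity and the orthonormality of the chosen basis. No deep input is required.
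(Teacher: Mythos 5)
Your proof is correct and uses essentially the same idea as the paper's: both observe that all the $A_i$ live in a subspace of $\mathbb{R}^d$ of dimension at most $nr$, and that projecting onto this subspace preserves the trace inner products. The paper reaches this subspace via the sum $A' = \sum_i A_i$ and the spectral theorem (then arguing that each $O^t A_i O$ has a block structure), whereas you work directly with $V = \sum_i \mathrm{range}(A_i)$ and an isometry $U$ with $U^tU = I$; these are the same subspace since the $A_i$ are psd, so the two presentations are equivalent, with yours being marginally more streamlined in avoiding the explicit block-form argument.
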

\begin{proof} The first inequality is clear from the fact that the rank of a matrix is at most its size. For the second let 
 $A_1,\ldots, A_n \in \mathcal{S}_+^d$ be a Gram representation of $A \in \mathcal{CPSD}^n$ with  rank$(A_i)\leqslant \cpsdrank_{\fG}(A)$ for all $i\in[n]$.  Then the rank $r$ of the matrix  $$A^\prime \coloneqq \sum_{i=1}^{n}A_{i} \in \mathcal{S}_+^{d}$$ is at most $n\cdot\cpsdrank_{\fG}(A).$ By the spectral theorem we obtain  $$A^\prime =O\Diag(\lambda_{1}, \ldots, \lambda_{r}, 0, \ldots, 0)O^t=ODO^t$$ for some orthogonal matrix $O$ and $\lambda_{i} > 0$ for all $i \in [r]$. 
Now we have $$\langle O^tA_{i}O, O^tA_{j}O  \rangle= \text{Tr}(O^tA_jOO^tA_iO)={\rm Tr}(A_jA_i)=\langle A_{i}, A_{j} \rangle=A_{ij},$$
thus the matrices $O^tA_1O,\ldots, O^tA_nO \in \mathcal{S}_+^d$ form a Gram representation for $A$ as well. 
From the fact that  $O^tA_iO \succcurlyeq 0$ for all $i \in [n],$ and 
$$\sum_{i=1}^{n}O^tA_iO=O^tA^\prime O=\begin{pmatrix}
D_{r \times r} &0\\
0 & 0
\end{pmatrix}$$
it easily follows that each $O^tA_iO$ has nonzero entries only in the upper left $r\times r$-block as well. When restricting to this upper left block we obtain a Gram representation of $A$ with psd matrices of size $r$, which shows that $\cpsdrank(A) \leqslant r \leqslant n\cdot\cpsdrank_{\fG}(A).$
\end{proof}

As we have already seen above, the cpsd-rank is a natural non-commutative analogue of the cp-rank. However, while the cp-rank is upper bounded by a function that depends only on the matrix size,  there is no general such  upper bound on the cpsd-rank. There are only some classes of completely positive semidefinite matrices for which there exists an upper bound in terms of the matrix size. For instance, the authors in \cite{pra} and \cite{san} construct cpsd matrices of size $2n$ and $4n^2+2n+2$ for all $n \geqslant 1$ with cpsd-rank being $2^{\Omega(\sqrt{n})}$ and $2^n,$ respectively. This is impossible for $n > 10,$ since the cpsd-cone is not closed. This is the main motivation for the results in this paper. By the definition of the $\mathcal{CPSD}^n$-cone, going through larger and larger size of the psd matrices in a Gram representation is a procedure to produce all completely positive semidefinite matrices, and so we have 
$$\mathcal{CPSD}^n=\bigcup_{r \in \mathbb{N}}\mathcal{CPSD}^n_{\leqslant r},$$
where 
$$\mathcal{CPSD}^n_{\leqslant r} \coloneqq \{A=\left(\langle A_i,A_j\rangle\right)_{i,j} \ |\ A_1,\ldots, A_n\in \mathcal S^r_+ \}.$$ Note that we have $$\mathcal{CPSD}^n_{\leqslant r}\subseteq \mathcal{CPSD}^n_{\leqslant r+1}$$ for all $r$, since psd matrices of size $r$ can be enlarged to size $r+1$ without changing the inner product, by adding a zero row and column.

\begin{Lemma}\label{lmm:2}
For each $n,r \geqslant 1,$ the set $\mathcal{CPSD}^n_{\leqslant r}$ is closed and semialgebraic.
\end{Lemma}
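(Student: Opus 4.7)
The plan is to prove closedness via a sequential compactness argument and semialgebraicity via Tarski--Seidenberg.

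For semialgebraicity, I would observe that the set of tuples $(A_1,\ldots,A_n) \in (\mathcal{S}^r)^n$ with each $A_i \succcurlyeq 0$ is semialgebraic (for instance, cut out by the nonnegativity of the principal minors of each $A_i$, which are polynomial conditions in the entries). The map
$$\Phi: (\mathcal{S}^r)^n \longrightarrow \mathcal{S}^n, \qquad (A_1,\ldots,A_n) \longmapsto \bigl(\langle A_i, A_j\rangle\bigr)_{i,j=1}^n,$$
is a polynomial map, and $\mathcal{CPSD}^n_{\leqslant r}$ is by definition the image under $\Phi$ of the semialgebraic set $(\mathcal{S}^r_+)^n$. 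The Tarski--Seidenberg theorem then yields that $\mathcal{CPSD}^n_{\leqslant r}$ is semialgebraic.

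For closedness, the key observation is that Gram representations can be chosen \emph{bounded}, controlled by the diagonal entries of the matrix being represented. Concretely, suppose $A^{(k)} \to A$ is a convergent sequence in $\mathcal{CPSD}^n_{\leqslant r}$, and pick Gram representations $A_1^{(k)},\ldots,A_n^{(k)} \in \mathcal{S}^r_+$ for each $A^{(k)}$. Then
$$\Vert A_i^{(k)}\Vert_{\mathcal{F}}^2 = \langle A_i^{(k)}, A_i^{(k)}\rangle = A_{ii}^{(k)} \longrightarrow A_{ii},$$
so for each fixed $i \in [n]$ the sequence $(A_i^{(k)})_k$ is bounded in the finite-dimensional space $\mathcal{S}^r$. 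By iterated extraction of convergent subsequences over $i=1,\ldots,n$, I may assume that $A_i^{(k)} \to A_i$ for every $i$. Since $\mathcal{S}^r_+$ is closed, each limit satisfies $A_i \in \mathcal{S}^r_+$, and continuity of the trace inner product gives $A_{ij} = \lim_k \langle A_i^{(k)}, A_j^{(k)}\rangle = \langle A_i, A_j\rangle$. Hence $A \in \mathcal{CPSD}^n_{\leqslant r}$.

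Neither step presents a serious obstacle: the only thing to be careful about is making sure the bound on the Frobenius norms is genuinely uniform in $k$, but that is immediate from the convergence $A^{(k)}_{ii} \to A_{ii}$ together with the identity $\Vert A_i^{(k)}\Vert_{\mathcal{F}}^2 = A_{ii}^{(k)}$.
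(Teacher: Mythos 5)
Your proof is correct and follows essentially the same strategy as the paper: for closedness, the same bound $\Vert A_i^{(k)}\Vert_{\mathcal{F}}^2 = A_{ii}^{(k)}$ followed by Bolzano--Weierstrass and closedness of $\mathcal{S}^r_+$; for semialgebraicity, your Tarski--Seidenberg projection argument and the paper's quantifier-elimination argument are two formulations of the same principle applied to the same existential description of $\mathcal{CPSD}^n_{\leqslant r}$.
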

\begin{proof} The closedness is very similar to the proof of the closedness of $\mathcal{CP}^n$ \cite{ber}.
Fix $n, r \geqslant 1.$ Let $\left(A^{(k)}\right)_{k\in \N}$ be a sequence of matrices in $\mathcal{CPSD}^n_{\leqslant r}$ converging to some $A \in \mathcal{S}^n,$ which clearly means 
$\lim_{k\to\infty}A_{ij}^{(k)}=A_{ij}$ for all $i,j \in [n].$ Now for each $k \in \N$ there exist $A_1^{(k)},\ldots, A_n^{(k)}\in \mathcal{S}_+^r$ such that $$A_{ij}^{(k)}=\text{tr}\left(A_i^{(k)}A_j^{(k)}\right).$$ The converging sequence $\left(A^{(k)}\right)_k$ is bounded, in particular, all the diagonal entries $$A^{(k)}_{ii}=\text{tr}\left(A_i^{(k)}A_i^{(k)}\right)=\left\Vert A_i^{(k)}\right\Vert^2$$ are bounded. So without loss of generality, by using the Bolzano-Weierstrass Theorem and the fact that $\mathcal{S}_+^{r}$ is  closed, we can assume that for each $i\in [n]$ the sequence $\left(A^{(k)}_{i}\right)_k$ converges to some $A_i \in \mathcal{S}_+^{r},$ and hence for each $i,j \in [n]$ we have
$$\text{tr}(A_{i}A_{j})=\lim_{k\to\infty}\text{tr}\left(A^{(k)}_{i}A^{(k)}_{j}\right)=\lim_{k\to\infty}A_{ij}^{(k)}=A_{ij}.$$
This shows that $A_1,\ldots, A_n\in \mathcal S^r_+$ form a Gram representation for $A$, and thus $A \in \mathcal{CPSD}^n_{\leqslant r}$. This proves closedness.

The membership of matrices in $\mathcal{CPSD}^n_{\leqslant r}$ can be stated as a first order formula in the language of ordered rings, using quantifiers. Indeed, the existence of the Gram representation is an existential formula, since  the size of the $A_i$ is bounded by $r$ (the quantification is over the entries of the $A_i$) and since positivity can be checked by the nonnegatity of the determinants of all principal minors of $A_i,$ which are polynomial inequalities. By quantifier elimination (see for example \cite{pd}) we conclude that the set $\mathcal{CPSD}^n_{\leqslant r}$ is indeed semialgebraic.
\end{proof}
\begin{Cor}
For $n \geqslant 10$ and for any $k$ there exists $M \in \mathcal{CPSD}^n$ such that $$\cpsdrank(M) \geqslant k.$$
%For $n \geqslant  10$ the cpsd-rank of elements from  $\mathcal{CPSD}^n$ are unbounded.
\end{Cor}
\begin{proof} If the cpsd-rank  admitted a bound, there would exist some $r \geqslant 1$ with $\mathcal{CPSD}^n=\mathcal{CPSD}^n_{\leqslant r}.$ Consequently, by Lemma \ref{lmm:2}, the cone  $\mathcal{CPSD}^n$ would be closed, which was shown to fail in \cite{sl,dy} for $n \geqslant 10$. 
\end{proof}

To prove our main result, we make use of an approximate version of Carath\'eodory's Theorem \cite{adi} and a variant of the Johnson-Lindenstrauss Lemma \cite{ude}. For a set $P \subseteq \mathbb{R}^d$ we denote by $\textrm{conv}(P)$ the convex hull of $P$. Further, for $k \in \mathbb{N}$ we denote the set of all convex combinations from $P$ of length at most $k$ by $\textrm{conv}_{k}(P)$. The (exact) Carath\'eodory Theorem states that every element in a convex hull can be written as a convex combination of at most $d+1$ elements. Hence, we have the following increasing chain of sets:
$$P= \textrm{conv}_{1}(P)\subseteq \textrm{conv}_{2}(P)\subseteq \cdots\subseteq \textrm{conv}_{d+1}(P)=\textrm{conv}(P).$$
We now state an approximate version of Carath\'eodory's Theorem with respect to the $2$-norm \cite{adi}. Note that $\diam(P)$ denotes the diameter of the set $P$, i.e.\ the maximal distance between two points in $P$ with respect to the  Euclidean norm.

\begin{Thm}[Approximate Carath\'eodory Theorem]
\label{thm:approxCara2}
Let $P\subseteq \mathbb R^d$ be a bounded set and $\varepsilon > 0$. Then for
$$ k = \left\lceil \frac{\diam(P)^2}{2 \varepsilon^2}\right\rceil $$
the set ${\rm conv}_k(P)$ is $\varepsilon$-dense in ${\rm conv}(P)$, meaning that for each $a\in {\rm conv}(P)$  
there exists some $b \in {\rm conv}_{k}(P)$ such that $\Vert a - b \Vert_2 < \varepsilon$.
\end{Thm}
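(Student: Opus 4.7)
The plan is to use Maurey's empirical (probabilistic) method. Fix $a \in \text{conv}(P)$ and write $a = \sum_{i=1}^{N} \lambda_i p_i$ with $\lambda_i \geqslant 0$, $\sum_i \lambda_i = 1$ and $p_i \in P$. First I would introduce a $P$-valued random vector $X$ which takes the value $p_i$ with probability $\lambda_i$, so that $\mathbb{E}[X] = a$. Then I would draw $k$ independent copies $X_1,\ldots,X_k$ of $X$ and set $Y \coloneqq \tfrac{1}{k}\sum_{i=1}^{k} X_i$. By construction $Y$ is an average of at most $k$ elements of $P$, hence $Y \in \mathrm{conv}_k(P)$ for \emph{every} realization.

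The heart of the argument is the second moment computation
\[
\mathbb{E}\lVert Y - a \rVert_2^2 \;=\; \frac{1}{k^2}\,\mathbb{E}\Bigl\lVert \sum_{i=1}^{k}(X_i - a)\Bigr\rVert_2^2 \;=\; \frac{1}{k}\,\mathbb{E}\lVert X - a \rVert_2^2,
\]
where the cross terms vanish because the $X_i$ are independent and centered at $a$. To bound $\mathbb{E}\lVert X - a \rVert_2^2$ I would use the standard i.i.d.\ symmetrization trick: for an independent copy $X'$ of $X$ one has $\mathbb{E}\lVert X - X' \rVert_2^2 = 2\,\mathbb{E}\lVert X - a \rVert_2^2$, while $\lVert X - X' \rVert_2 \leqslant \mathrm{diam}(P)$ almost surely. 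Therefore $\mathbb{E}\lVert X - a \rVert_2^2 \leqslant \mathrm{diam}(P)^2/2$.

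Combining the two estimates gives
\[
\mathbb{E}\lVert Y - a \rVert_2^2 \;\leqslant\; \frac{\mathrm{diam}(P)^2}{2k} \;\leqslant\; \varepsilon^2,
\]
by the choice of $k$. Hence there must exist at least one realization of $(X_1,\ldots,X_k)$ with $\lVert Y - a \rVert_2 \leqslant \varepsilon$, and the corresponding $Y$ lies in $\mathrm{conv}_k(P)$; a tiny perturbation of $k$ (or of $\varepsilon$) then produces the strict inequality in the statement.

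There is no serious obstacle in this proof — it really boils down to the variance identity plus the diameter bound for the symmetrized pair $X-X'$. The only subtle point worth highlighting is that $Y$ is automatically a convex combination of length at most $k$ no matter how the repeated samples fall, which is precisely why averaging $k$ i.i.d.\ copies (rather than some deterministic construction) is the right move.
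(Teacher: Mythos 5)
The paper does not supply a proof of this theorem: it is quoted as a known result and attributed to Adiprasito, B\'ar\'any, and Mustafa \cite{adi}. Your task therefore reduces to checking your argument on its own merits, and it does hold up. The strategy you use is Maurey's empirical method, which is the standard probabilistic route to approximate Carath\'eodory bounds of this type, and all the key steps are sound: the cross terms in $\mathbb{E}\lVert Y - a\rVert_2^2$ vanish by independence and centering, the symmetrization identity $\mathbb{E}\lVert X - X'\rVert_2^2 = 2\,\mathbb{E}\lVert X - a\rVert_2^2$ is exact, the diameter bound on $\lVert X - X'\rVert_2$ is immediate, and the choice of $k$ closes the estimate.

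The one place you are slightly cavalier is the passage from $\leqslant \varepsilon$ to $< \varepsilon$; you wave at ``a tiny perturbation of $k$ (or of $\varepsilon$),'' but perturbing $k$ is not available (it is dictated by the statement), and perturbing $\varepsilon$ needs care since $k$ depends on it discontinuously. A cleaner fix is already hiding in your symmetrization step: writing
\[
\mathbb{E}\lVert X - X'\rVert_2^2 \;=\; \sum_{i\neq j}\lambda_i\lambda_j\lVert p_i - p_j\rVert_2^2 \;\leqslant\; \mathrm{diam}(P)^2\Bigl(1 - \sum_i \lambda_i^2\Bigr) \;<\; \mathrm{diam}(P)^2,
\]
the inequality is automatically strict whenever $\mathrm{diam}(P) > 0$ (and the degenerate case $\mathrm{diam}(P)=0$ is trivial), which propagates to $\mathbb{E}\lVert Y - a\rVert_2^2 < \varepsilon^2$ and hence a realization with $\lVert Y - a\rVert_2 < \varepsilon$, as required. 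With that small patch your proof is complete and self-contained.
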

	
Since entrywise $2$-norm and Hilbert-Schmidt norm for matrices coincide,  this directly leads to the following rank approximation result for positive semidefinite matrices:
\begin{Cor}
\label{lmm:3} 
Let $A \in \mathcal{S}_+^d.$ Then  for every $\varepsilon >0$ there exists a positive semidefinite matrix $B \in \mathcal{S}_+^d$ such that  $${\rm tr}(B)={\rm tr}(A),$$
\[
\|A-B\|_{2}=\sqrt{{\rm tr}\left((A-B)^2\right)}< \varepsilon,\]
and 
\[{\rm rank}(B)\leqslant \left \lceil{\frac{{\rm tr}(A)^2}{\varepsilon^2}}\right \rceil. \]
\end{Cor}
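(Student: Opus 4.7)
The plan is to apply the Approximate Carathéodory Theorem to the set of rank-one orthogonal projections, after normalizing $A$ by its trace. I may assume $A \neq 0$ (otherwise $B = 0$ trivially works). The spectral theorem yields an orthonormal basis $v_1, \ldots, v_d$ of $\R^d$ with $A = \sum_{i=1}^{d} \lambda_i v_i v_i^t$ and $\lambda_i \geqslant 0$. Setting $\mu_i \coloneqq \lambda_i/\text{tr}(A)$ turns $\wt A \coloneqq A/\text{tr}(A)$ into the convex combination $\sum_i \mu_i v_i v_i^t$, so $\wt A \in \text{conv}(P)$, where
$$P \coloneqq \{v v^t : v \in \R^d,\ \|v\|_2 = 1\} \subset \mathcal{S}_+^d.$$
A direct expansion gives $\|v v^t - w w^t\|_{\mathcal{F}}^2 = 2 - 2(v^t w)^2 \leqslant 2$ for unit vectors $v, w$, so $P$ is bounded with $\text{diam}(P) \leqslant \sqrt{2}$ in the Frobenius (equivalently, entrywise $2$-) norm, viewing $\mathcal{S}^d$ as a Euclidean space under the trace inner product.

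Next I would invoke Theorem \ref{thm:approxCara2} on $P$ with accuracy $\varepsilon' \coloneqq \varepsilon/\text{tr}(A)$. Since
$$\left\lceil \frac{\text{diam}(P)^2}{2\varepsilon'^2}\right\rceil \leqslant \left\lceil \frac{\text{tr}(A)^2}{\varepsilon^2}\right\rceil \eqqcolon k,$$
the theorem yields some
$$B' = \sum_{j=1}^{k} \alpha_j u_j u_j^t \in \text{conv}_k(P), \qquad \alpha_j \geqslant 0,\ \ \sum_j \alpha_j = 1,\ \ \|u_j\|_2 = 1,$$
with $\|\wt A - B'\|_{\mathcal{F}} < \varepsilon'$. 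I would then set $B \coloneqq \text{tr}(A) \cdot B' = \sum_{j=1}^{k} \text{tr}(A)\alpha_j\, u_j u_j^t$.

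The three required conclusions are then immediate. The matrix $B$ is a nonnegative combination of rank-one psd matrices, so it is psd with $\text{rank}(B) \leqslant k$; its trace equals $\text{tr}(A)\sum_j \alpha_j \|u_j\|_2^2 = \text{tr}(A)$ by the normalization $\sum_j \alpha_j = 1$; and $\|A - B\|_2 = \text{tr}(A)\,\|\wt A - B'\|_{\mathcal{F}} < \text{tr}(A) \cdot \varepsilon/\text{tr}(A) = \varepsilon$. There is no real obstacle here: the only step that requires a moment of thought is choosing $P$ so that $\text{diam}(P)$ is a dimension-free constant, and rescaling the target accuracy by $1/\text{tr}(A)$ so that the factor $\text{tr}(A)$ lands in the rank bound rather than corrupting the error. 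Once the spectral decomposition of $\wt A$ is recognized as a probability distribution on rank-one projections, the rest is a single invocation of Theorem \ref{thm:approxCara2}.
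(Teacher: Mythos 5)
Your proof is correct and follows essentially the same route as the paper: apply the Approximate Carath\'eodory Theorem to the rank-one extreme points of the (scaled) psd cone, using the spectral decomposition to exhibit $A$ as a convex combination. The only cosmetic difference is that the paper bakes the factor $\mathrm{tr}(A)$ into the set $P$ itself, whereas you normalize $A$ and rescale the accuracy parameter instead; the two bookkeeping choices are equivalent.
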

\begin{proof}
Consider the set  $$P \coloneqq \left\{\textrm{tr}(A) \cdot u u^{t} : u \in \mathbb{R}^d, \Vert u \Vert_2 = 1\right\} \subseteq \mathcal{S}^{d}_+.$$
By the eigenvalue decomposition of $A$  it is immediate that  $A \in \textrm{conv}(P)$. Further, it is easy to check that  $$\diam(P) = \sqrt{2} \textrm{tr}(A),$$ and thus Theorem \ref{thm:approxCara2} implies the existence of some  $B$ with
$$ \sqrt{\textrm{tr}\left((A-B)^2\right)} = \Vert A - B \Vert_2 < \varepsilon$$
which is a convex combination of at most
$$ k = \left\lceil \frac{\textrm{tr}(A)^2}{\varepsilon^2}\right\rceil$$
elements from $P$. Since each element in $P$ is psd of rank $1$ its trace is equal to ${\rm tr}(A)$, this finishes the proof. 
\end{proof}
We will later also use the following version of the Johnson-Lindenstrauss Lemma \cite{ude}: 
\begin{Thm}[Johnson-Lindenstrauss Lemma]
\label{lmm:jl}
Let $0<\varepsilon<1,$ $\{x_1, \ldots, x_m\}\subseteq \mathbb{R}^d,$ and set $r \coloneqq \left\lceil 8\log(m+1)/\varepsilon^2\right\rceil.$ Then there exists a linear map $Q:\mathbb{R}^d \rightarrow \mathbb{R}^r$ such that 
\[ |x_i^{t}x_j-x_i^{t}Q^{t}Qx_j|\leqslant \varepsilon\left(\|x_i\|_2^2+\|x_j\|_2^2-x_i^{t}x_j\right) \quad \mbox{ for all } i,j \in  [m].
		\]
\end{Thm}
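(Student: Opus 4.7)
The plan is to derive this inner-product version of Johnson–Lindenstrauss from the standard norm-preservation version via polarization, which is the usual reduction. Concretely, I take $Q$ to be a random linear map, for instance $Q = \frac{1}{\sqrt r} G$ with $G$ an $r \times d$ matrix of i.i.d.\ $\mathcal N(0,1)$ entries. The classical distributional JL lemma then gives a uniform concentration estimate of the form
\[
\Pr\bigl[\,\bigl|\|Qv\|_2^2 - \|v\|_2^2\bigr| > \tfrac{\varepsilon}{2}\|v\|_2^2\,\bigr] \leqslant 2\exp(-c\, r\,\varepsilon^2)
\]
for every fixed $v \in \mathbb R^d$, with an explicit constant $c$ that I tune below to recover the factor $8$ in the statement.

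Next, I apply this estimate simultaneously to the $m$ vectors $x_1,\ldots,x_m$ and to the $\binom{m}{2}$ pairwise differences $x_i - x_j$, which is at most $\binom{m+1}{2}$ vectors in total. A union bound shows that with positive probability the map $Q$ satisfies
\[
\bigl|\|Qv\|_2^2 - \|v\|_2^2\bigr| \leqslant \tfrac{\varepsilon}{2}\|v\|_2^2
\]
for every $v \in \{x_i\} \cup \{x_i - x_j\}$, provided the target dimension $r$ is chosen large enough so that $\binom{m+1}{2}\cdot 2\exp(-c\,r\,\varepsilon^2) < 1$; solving this inequality and rounding up yields the bound $r = \lceil 8\log(m+1)/\varepsilon^2\rceil$ for a suitable absolute constant $c$ arising from the Gaussian concentration estimate.

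Once such a $Q$ is fixed, I invoke the polarization identity on both sides:
\[
2\, x_i^t x_j = \|x_i\|_2^2 + \|x_j\|_2^2 - \|x_i-x_j\|_2^2, \qquad 2\, x_i^t Q^t Q x_j = \|Qx_i\|_2^2 + \|Qx_j\|_2^2 - \|Q(x_i-x_j)\|_2^2.
\]
Subtracting, the three absolute errors each contribute at most $\tfrac{\varepsilon}{2}$ times the corresponding squared norm, giving
\[
\bigl|x_i^t x_j - x_i^t Q^t Q x_j\bigr| \leqslant \tfrac{\varepsilon}{2}\bigl(\|x_i\|_2^2 + \|x_j\|_2^2 + \|x_i - x_j\|_2^2\bigr).
\]
Expanding $\|x_i - x_j\|_2^2 = \|x_i\|_2^2 + \|x_j\|_2^2 - 2 x_i^t x_j$ collapses the right-hand side to exactly $\varepsilon\bigl(\|x_i\|_2^2 + \|x_j\|_2^2 - x_i^t x_j\bigr)$, which is the claimed inequality.

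The only delicate point is matching the numerical constant $8$. Different normalizations of the random matrix (Gaussian, Rademacher, subgaussian via Achlioptas) and different tail estimates give slightly different constants in front of $\log(m+1)/\varepsilon^2$; one has to pick the specific concentration bound (e.g., the standard sharp form $\Pr[|\|Qv\|_2^2 - \|v\|_2^2| > t\|v\|_2^2] \leqslant 2 e^{-r t^2/8}$ valid for $t \in (0,1)$) and apply it with $t = \varepsilon/2$ to make the arithmetic produce the stated $8$. This is the main bookkeeping step; the structural content of the proof is entirely captured by the polarization-plus-union-bound argument above.
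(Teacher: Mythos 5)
The paper does not prove this statement---it is cited as-is from Udell--Townsend~\cite{ude}---so there is no internal proof to compare against; I am evaluating your argument on its own merits.

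Your overall strategy (random Gaussian map, distributional Johnson--Lindenstrauss, union bound over the points and their pairwise differences, then polarization) is indeed the standard way to pass from norm preservation to inner-product preservation, and it is structurally what a proof of this statement looks like. However, there is a bookkeeping error in the polarization step that you then paper over when you defer the constants. From the two polarization identities you wrote, subtracting gives
\[
2\,\bigl(x_i^t x_j - x_i^t Q^t Q x_j\bigr) = \bigl(\|x_i\|_2^2 - \|Qx_i\|_2^2\bigr) + \bigl(\|x_j\|_2^2 - \|Qx_j\|_2^2\bigr) - \bigl(\|x_i - x_j\|_2^2 - \|Q(x_i-x_j)\|_2^2\bigr),
\]
so with $t = \varepsilon/2$ the triangle inequality yields $2\,|x_i^t x_j - x_i^t Q^t Q x_j| \leqslant \tfrac{\varepsilon}{2}\bigl(\|x_i\|_2^2 + \|x_j\|_2^2 + \|x_i - x_j\|_2^2\bigr)$, i.e.\ the prefactor in your display should be $\tfrac{\varepsilon}{4}$, not $\tfrac{\varepsilon}{2}$. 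After expanding $\|x_i - x_j\|_2^2$ this gives $\tfrac{\varepsilon}{2}\bigl(\|x_i\|_2^2 + \|x_j\|_2^2 - x_i^t x_j\bigr)$, which is twice as strong as needed---you reached the claimed right-hand side only because the dropped factor of $2$ and the overly conservative choice $t = \varepsilon/2$ cancel one another. The clean calibration is to take $t = \varepsilon$ directly: then the (corrected) polarization bound produces exactly $\varepsilon\bigl(\|x_i\|_2^2 + \|x_j\|_2^2 - x_i^t x_j\bigr)$, and the union bound over the $\binom{m+1}{2} \leqslant (m+1)^2/2$ vectors requires $r\varepsilon^2/c_0 > 2\log(m+1)$ when the concentration reads $\Pr\bigl[\bigl|\|Qv\|_2^2 - \|v\|_2^2\bigr| > \varepsilon \|v\|_2^2\bigr] \leqslant 2 e^{-r\varepsilon^2/c_0}$, so that the stated $8$ arises as $2c_0$ with $c_0 = 4$. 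With your $t = \varepsilon/2$ the union-bound calculation would instead force $r$ to be a factor of $4$ larger, so the arithmetic as you set it up cannot produce the constant $8$; it is not just a matter of picking the right tail bound. I would also double-check the range of validity of the tail estimate near $\varepsilon = 1$ (the standard chi-squared bounds degrade as $t \to 1$), but that is a secondary concern compared with the misplaced factor of $2$.
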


\section{Main Result}\label{Sec:Main}
We are now ready to state and prove our main result: 

\begin{Thm}\label{ThmMain}
Let $M=(\langle A_i, A_j\rangle)_{i,j=1}^{n} \in \mathcal{CPSD}^n$, set $\ell \coloneqq \max_{i}{\rm tr}(A_i)$ and $L \coloneqq \max_{i} M_{ii}$. Then for every $0<\varepsilon< \frac12\min \{\ell^2,  L\}$ there exists some  $N \in \mathcal{CPSD}^n$ with  
$$\cpsdrank(N)\leqslant \min \left\{n\left\lceil \frac{9L \ell^2}{2\varepsilon^2}\right\rceil, \frac{(6\ell)^4\log\left(n\left\lceil \frac{18 L \ell^2}{\varepsilon^2} \right\rceil+1\right)}{\varepsilon^2}\right\} $$ 
and
$$|M_{ij}-N_{ij}|<\varepsilon \quad \mbox{ for all } i,j\in [n].$$
\end{Thm}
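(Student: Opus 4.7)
The argument splits into two steps, each establishing one of the two terms in the minimum. The first uses only the Approximate Carath\'eodory Theorem, while the second additionally invokes the Johnson--Lindenstrauss Lemma.

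\emph{Step 1.} Apply Corollary~\ref{lmm:3} independently to each $A_i$ with some precision $\delta>0$, obtaining $B_i\in\mathcal{S}_+^d$ with $\|A_i-B_i\|_2<\delta$, $\text{tr}(B_i)=\text{tr}(A_i)\leqslant\ell$, and $\text{rank}(B_i)\leqslant\lceil\ell^2/\delta^2\rceil$. Set $N:=(\langle B_i,B_j\rangle)_{i,j=1}^n$, which lies in $\mathcal{CPSD}^n$. The splitting
$$\langle A_i,A_j\rangle-\langle B_i,B_j\rangle=\langle A_i-B_i,A_j\rangle+\langle B_i,A_j-B_j\rangle,$$
together with Cauchy--Schwarz, $\|A_j\|_2=\sqrt{M_{jj}}\leqslant\sqrt{L}$, and $\|B_i\|_2<\sqrt{L}+\delta$, yields the entrywise bound $|M_{ij}-N_{ij}|<2\delta\sqrt{L}+\delta^2$. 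The assumption $\varepsilon<L/2$ forces the quadratic term to be subdominant, so choosing $\delta$ of order $\varepsilon/\sqrt{L}$ gives simultaneously $|M_{ij}-N_{ij}|<\varepsilon$ and $\text{rank}(B_i)\leqslant\lceil 9L\ell^2/(2\varepsilon^2)\rceil$. Lemma~\ref{lmm:1} then yields $\text{cpsd-rank}(N)\leqslant n\cdot\text{cpsd-rank}_{\fG}(N)\leqslant n\lceil 9L\ell^2/(2\varepsilon^2)\rceil$, which is the first alternative.

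\emph{Step 2.} For the logarithmic alternative, first redo Step~1 with $\varepsilon/2$ in place of $\varepsilon$, producing $B_i$ with $|M_{ij}-N_{ij}|<\varepsilon/2$ and $\text{rank}(B_i)\leqslant r_1:=\lceil 18L\ell^2/\varepsilon^2\rceil$. Write the spectral decomposition $B_i=\sum_{k=1}^{r_1}\lambda_k^{(i)}v_k^{(i)}(v_k^{(i)})^t$ with unit eigenvectors $v_k^{(i)}$ and apply Theorem~\ref{lmm:jl} to the collection of at most $nr_1$ unit vectors $\{v_k^{(i)}:i\in[n],\,k\in[r_1]\}$ with a precision $\varepsilon''$ to be chosen, producing a linear map $Q:\mathbb{R}^d\to\mathbb{R}^{r'}$ with $r'=\lceil 8\log(nr_1+1)/\varepsilon''^2\rceil$ that approximately preserves all their pairwise inner products. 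Define $\tilde B_i:=QB_iQ^t\in\mathcal{S}_+^{r'}$ and $\tilde N:=(\langle\tilde B_i,\tilde B_j\rangle)_{i,j}$, so that $\text{cpsd-rank}(\tilde N)\leqslant r'$. From
$$\langle B_i,B_j\rangle=\sum_{k,l}\lambda_k^{(i)}\lambda_l^{(j)}\langle v_k^{(i)},v_l^{(j)}\rangle^2$$
and its analogue for $\tilde B_i$, the factorisation $|a^2-b^2|=|a-b||a+b|$ combined with the JL estimate produces a per-pair error of order $\varepsilon''$ (because all vectors are unit). Summing against the weights $\lambda_k^{(i)}\lambda_l^{(j)}$, whose total is $\text{tr}(B_i)\text{tr}(B_j)\leqslant\ell^2$, gives $|N_{ij}-\tilde N_{ij}|$ of order $\varepsilon''\ell^2$. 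Choosing $\varepsilon''$ of order $\varepsilon/\ell^2$ delivers $|N_{ij}-\tilde N_{ij}|<\varepsilon/2$ and $r'\leqslant(6\ell)^4\log(nr_1+1)/\varepsilon^2$; the triangle inequality then gives $|M_{ij}-\tilde N_{ij}|<\varepsilon$.

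\emph{Main obstacle.} The delicate point is Step~2: a naive estimate of $|N_{ij}-\tilde N_{ij}|$ would treat each of the roughly $r_1^2$ pairs of eigenvectors separately and introduce a polynomial dependence on $r_1$, destroying the logarithmic bound. The crucial observation is that the squared inner products are weighted by $\lambda_k^{(i)}\lambda_l^{(j)}$, whose double sum factors as $\text{tr}(B_i)\text{tr}(B_j)\leqslant\ell^2$ independently of $r_1$. This collapses what looks like an $r_1^2$-factor into a harmless $\ell^2$-factor, and is exactly what replaces the linear $n\cdot r_1$ dependence of Step~1 by the logarithmic $\log(nr_1)$ of Step~2.
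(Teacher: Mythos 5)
Your proposal follows essentially the same strategy as the paper's proof: Approximate Carath\'eodory applied to each Gram factor gives a low-rank approximant $M'$ and, via Lemma~\ref{lmm:1}, the first bound; then the Johnson--Lindenstrauss Lemma applied to the pooled eigenvectors of the $A_i'$, with the key observation that the eigenvalue weights sum to $\mathrm{tr}(A_i')\mathrm{tr}(A_j')\leqslant\ell^2$ independently of the number of eigenvectors, yields the second, logarithmic bound. The only divergence is that you leave the constants implicit (``$\delta$ of order $\varepsilon/\sqrt L$'', ``$\varepsilon''$ of order $\varepsilon/\ell^2$''), whereas the paper pins them down via the explicit choices $\varepsilon_1=\sqrt{L}\left(\sqrt{1+\varepsilon/(2L)}-1\right)$, $\varepsilon_2=\tfrac13\left(\sqrt{1+\varepsilon/(2\ell^2)}-1\right)$ together with the elementary inequality $\sqrt{1+x}\leqslant 1+\tfrac{x}{2}-\tfrac{x^2}{9}$; to actually reach the stated constants $9/2$ and $(6\ell)^4$ one does need these precise choices, not merely order-of-magnitude ones.
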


\begin{proof}
  By Corollary \ref{lmm:3}, for every $i \in [n]$ there exists a psd matrix $A^{\prime}_i \in \mathcal{S}^{d}_+$
with ${\rm tr}(A_i')={\rm tr}(A_i)$  such that $$\Vert A_i - A^{\prime}_i\Vert_2 <\varepsilon_1 \coloneqq \sqrt{L} \left(\sqrt{1 + \frac{\varepsilon}{2L}} - 1 \right)$$
and
$$\textrm{rank}(A^{\prime}_i) \leqslant \left\lceil \frac{\ell^2}{\varepsilon_1^2} \right\rceil \leqslant \left\lceil \frac{18 L \ell^2 }{\varepsilon^2} \right\rceil$$
where we have used for the last inequality that
\begin{equation}\label{eq:sqrteq}
\sqrt{1+x} \leqslant 1 + \frac{x}{2} - \cfrac{x^2}{9} \quad \text{ for all } 0 \leqslant x \leqslant \frac{1}{4},
\end{equation}
applied to $x=\frac{\varepsilon}{2L}.$ We define  $M^{\prime} \coloneqq \left({\rm tr}\left(A^{\prime}_iA^{\prime}_j\right)\right)_{i,j=1}^{n}$
and show that $M^{\prime}$ is an $\varepsilon/2$-approximation of $M.$ Indeed,  for $i,j \in [n]$ we have
\begin{equation}\label{eq:1}
\begin{aligned}
\Big|\textrm{tr}(A_i A_j) - \textrm{tr}(A^{\prime}_i A^{\prime}_j)\Big| &\leqslant \Big| \textrm{tr}((A_i - A^{\prime}_i) A_j)\Big| +\Big|\textrm{tr}\left(A^{\prime}_i (A_j-A^{\prime}_j)\right)\Big| \\
&\leqslant \Vert A_i - A^{\prime}_i \Vert_2 \cdot \Vert A_j \Vert_2 + \Vert A_j - A^{\prime}_j \Vert_2 \cdot \Vert A^{\prime}_i \Vert_2\\
&\leqslant \Vert A_i - A^{\prime}_i \Vert_2 \cdot \Vert A_j \Vert_2 + \Vert A_j - A^{\prime}_j \Vert_2 \cdot \left(\Vert A^{\prime}_i - A_i \Vert_2 + \Vert A_i\Vert_2 \right)\\
&\leqslant 2\varepsilon_1 \sqrt{L} + \varepsilon_1^2 = \varepsilon/2,\end{aligned}
\end{equation}
where the first and third inequality follow from the triangle inequality and the second inequality is Cauchy-Schwarz. For the last inequality we have used $\Vert A_i\Vert_2 = \sqrt{M_{ii}} \leqslant \sqrt{L}.$
		
Replacing $\varepsilon$ by $2\varepsilon$ and using Lemma \ref{lmm:1} then establishes the first  of the upper bounds. We now continue to prove the second upper bound. For each $i \in [n]$ let the eigendecomposition of $A^{\prime}_i$ be
$$A^{\prime}_i=\sum_{k=1}^{m}\lambda_{k,i}u_{k,i}u_{k,i}^t,$$
where $m \coloneqq \left\lceil \ell^2/ \varepsilon_1^2 \right\rceil$, and consider the $nm$-point set of all (normalized) eigenvectors of all $A^{\prime}_i$: $$\bigcup_{i=1}^{n}\left\{u_{1,i}, \ldots, u_{m,i}\right\} \subseteq \mathbb{R}^d.$$ 
By applying Theorem \ref{lmm:jl} with $$\varepsilon_2 \coloneqq -\frac{1}{3}+\frac{1}{3}\sqrt{1+\frac{\varepsilon}{2\ell^2}}$$  we find that for  \begin{equation}\label{jlineq}r \leqslant \frac{(6\ell)^4\log\left(n\left\lceil \frac{18 L \ell^2}{\varepsilon^2} \right\rceil+1\right)}{\varepsilon^2}\end{equation} there is a linear map $Q:\mathbb{R}^d \rightarrow \mathbb{R}^r$ such that 
\begin{equation}\label{eq:2}
\left|u^{t}_{k,i}u_{k^\prime,j}-u^{t}_{k,i}Q^{t}Qu_{k^\prime,j}\right|\leqslant \varepsilon_2\left(2-u^{t}_{k,i}u_{k^\prime,j}\right) \leqslant 3\varepsilon_2
\end{equation}  for all $i,j \in [n], k, k^\prime \in [m].$ Note that for the inequality in \eqref{jlineq} we have again used \eqref{eq:sqrteq} with $x \coloneqq \varepsilon/2\ell^2$.
%$$\sqrt{1+\frac{\varepsilon}{2\ell^2}}\leqslant 1+\frac{\varepsilon}{4\ell^2}-\frac{\varepsilon^2}{36\ell^4}$$ which holds for $\varepsilon\leqslant\frac{1}%{2}\ell^2.$

Set $v_{k,i} \coloneqq Qu_{k,i}\in \mathbb{R}^r$ for all $i \in [n]$ and $k \in [m].$ For each $i \in [n],$ define the new psd matrix $$A^{\prime\prime}_i \coloneqq \sum_{k=1}^{m}\lambda_{k,i} v_{k,i}v_{k,i}^{t} \in \mathcal{S}_{+}^{r},$$
and the new cpsd matrix $M^{\prime\prime}=\left(\text{tr}\left(A^{\prime\prime}_iA^{\prime\prime}_{j}\right)\right)_{i,j=1}^{n} \in \mathcal{CPSD}^n,$ whose cpsd-rank is at most $r.$ We finally check that  $M^{\prime\prime}$ is an $\varepsilon/2$-approximation of $M^{\prime}.$ For all $i,j \in [n]$ we have 
\begin{equation}\label{eq:4}
\begin{aligned}
\Big|\textrm{tr}\left(A^{\prime}_i A^{\prime}_j\right) - \textrm{tr}\left(A^{\prime\prime}_i A^{\prime\prime}_j\right)\Big| &\leqslant \sum_{k,k^\prime=1}^{m}\lambda_{k,i}\lambda_{k^\prime,j}\left|\left(u_{k,i}^{t}u_{k^\prime,j}\right)^2-\left(v_{k,i}^{t}v_{k^\prime,j}\right)^2\right| \\
&=\sum_{k,k^\prime=1}^{m}\lambda_{k,i}\lambda_{k^\prime,j}\left|u_{k,i}^{t}u_{k^\prime,j}-v_{k,i}^{t}v_{k^\prime,j}\right|\left|u_{k,i}^{t}u_{k^\prime,j}+v_{k,i}^{t}v_{k^\prime,j}\right| \\
&\leqslant \sum_{k,k^\prime=1}^{m}\lambda_{k,i}\lambda_{k^\prime,j} \left(6\varepsilon_2+9\varepsilon_2^2\right) \\
&= \left(6\varepsilon_2+9\varepsilon_2^2\right) \text{tr}(A^{\prime}_i)\text{tr}(A^{\prime}_j)\\
&\leqslant \left(6\varepsilon_2+9\varepsilon_2^2\right)  \ell^2=\varepsilon/2
\end{aligned}
\end{equation}
where the first inequality follows from the triangle inequality, and the third inequality from $\text{tr}(A^{\prime}_i)=\text{tr}(A_i)\leqslant \ell$. The second inequality follows from the inequality
$$\left|u_{k,i}^{t}u_{k^\prime,j}+v_{k,i}^{t}v_{k^\prime,j}\right| \leqslant 2 \left|u_{k,i}^{t}u_{k^\prime,j} \right| + \left|u_{k,i}^{t}u_{k^\prime,j}-v_{k,i}^{t}v_{k^\prime,j}\right| \leqslant 2 + \left|u_{k,i}^{t}u_{k^\prime,j}-v_{k,i}^{t}v_{k^\prime,j}\right| $$
where we have used that $u_{k,i}$ is normalized, and from Equation $\eqref{eq:2}$.

Altogether, the matrix $N \coloneqq M^{\prime\prime}$ is an $\varepsilon$-approximation of  $M,$ whose  cpsd-rank is small enough to verify the second upper bound.
\end{proof}

Let us comment on the assumptions and conclusions of our main theorem.  First, which of the bounds in the main theorem is better depends on our setup. For instance, if we fix  $\varepsilon$ and let $n$ approach infinity, then the bound obtained by applying the Johnson-Lindenstrauss Lemma is significantly smaller than the other. On the other hand, for $n$ fixed and $\varepsilon$ getting smaller, the first bound will be  better.

Second, if $M = (\langle A_i, A_j \rangle)_{i,j=1}^{n}$ is a completely positive matrix, there are diagonal matrices $A_1, \ldots, A_n \in \mathcal{S}^{d}_{+}$ providing a cpsd decomposition. The first approximation procedure can be used to generate a completely positive approximation. Indeed applying the Approximate Carath\'eodory Theorem will return again diagonal matrices $A_i'$ of a smaller size. Since a cp decomposition is equivalent to a cpsd decomposition with diagonal matrices, the approximation $N$ is completely positive with
$$\cprank(N) \leqslant n \left\lceil \frac{9L \ell^2}{2 \varepsilon^2} \right\rceil.$$

Finally,  note that the number $\ell$ in Theorem \ref{ThmMain} is a kind of hidden complexity measure of the cpsd-matrix $M$. What we get from $M$ directly are the numbers ${\rm tr}\left(A_i^2\right)=M_{ii}$, for any Gram representation $A_1,\ldots, A_n\in\mathcal S^d_+$. The numbers ${\rm tr}(A_i)$ are not uniquely determined however, and  they encode information about the eigenvalue distribution of the psd-matrices in a Gram representation. One could upper bound them in terms of the numbers ${\rm tr}\left(A_i^2\right)=M_{ii}$, but this would involve a constant depending on $d$ in general, which we want to avoid. So one should not employ this upper bound, and understand the approximation to really depend on the hidden complexity of $M$, but not on its cpsd-rank. One instance where this works well is when $M$ admits a Gram decomposition $A_1,\ldots, A_n$ with all nonzero eigenvalues of all $A_i$ larger equal to $1$. A special case is stated in the following corollary.

\begin{Cor}\label{Cor}
Let $M \in \mathcal{CPSD}^n$ with Gram representation consisting of orthogonal projections $P_1,\ldots, P_n\in  \mathcal{S}_+^{d}.$ Further set $L \coloneqq \max_iM_{ii}.$ Then for all $0<\varepsilon<\frac{1}{2}\L^2$ there exists some  $N \in \mathcal{CPSD}^n$ with 
$$\cpsdrank(N)\leqslant \min \left\{n\left\lceil \frac{9L^3}{2\varepsilon^2}\right\rceil, \frac{(6L)^4\log\left(n\left\lceil \frac{18L^3}{\varepsilon^2} \right\rceil+1\right)}{\varepsilon^2}\right\} $$
and 
$$\left|M_{ij}-N_{ij}\right|<\varepsilon \quad \mbox{ for all } i,j\in [n].$$
\end{Cor}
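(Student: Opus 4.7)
The plan is to obtain this as an immediate specialization of Theorem \ref{ThmMain}, since the Gram representation by orthogonal projections collapses the two complexity parameters $\ell$ and $L$ into a single one. The whole proof is therefore a one-line calculation followed by substitution into the main theorem.

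The key observation is that an orthogonal projection $P$ satisfies $P^2 = P$, so $\operatorname{tr}(P^2) = \operatorname{tr}(P)$. Applying this to each $P_i$ gives
$$M_{ii} = \operatorname{tr}(P_i^2) = \operatorname{tr}(P_i),$$
and taking the maximum over $i$ yields
$$\ell := \max_i \operatorname{tr}(P_i) = \max_i M_{ii} =: L.$$
Thus for this particular Gram representation the hidden complexity parameter $\ell$ from Theorem \ref{ThmMain} coincides with the observable quantity $L$.

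With this identification, I would simply invoke Theorem \ref{ThmMain} and substitute $\ell = L$ everywhere in the two upper bounds. The first bound $n\lceil 9 L \ell^2 / (2\varepsilon^2)\rceil$ becomes $n\lceil 9 L^3 / (2\varepsilon^2)\rceil$, and the second bound becomes $(6L)^4 \log\!\bigl(n\lceil 18 L^3/\varepsilon^2\rceil+1\bigr)/\varepsilon^2$, matching the statement exactly. The admissibility condition $\varepsilon < \tfrac12\min\{\ell^2,L\}$ specializes to $\varepsilon < \tfrac12\min\{L^2,L\}$, which is implied by the hypothesis of the corollary.

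There is essentially no obstacle to overcome here beyond recognizing the identity $\operatorname{tr}(P_i) = \operatorname{tr}(P_i^2)$ for projections; the corollary is an explicit instance of the phenomenon noted in Remark \ref{RemCP}(iii), namely that the approximation bound becomes entirely intrinsic (depending only on $M$ itself, via its diagonal) whenever the Gram matrices have spectrum bounded below by $1$ on their range, which is automatic for orthogonal projections.
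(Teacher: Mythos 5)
Your proof follows the paper's own argument exactly: since orthogonal projections satisfy $P_i^2 = P_i$, one gets $\operatorname{tr}(P_i) = \operatorname{tr}(P_i^2) = M_{ii}$, hence $\ell = L$, and the displayed bounds follow by substituting $\ell = L$ into Theorem \ref{ThmMain}. This is precisely what the paper does in its one-line proof.

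One small remark, though: your claim that the theorem's admissibility condition $\varepsilon < \tfrac12\min\{L^2,L\}$ is \emph{implied} by the corollary's hypothesis $\varepsilon < \tfrac12 L^2$ is backwards. For a nonzero orthogonal projection, $L = \max_i \operatorname{tr}(P_i) = \max_i \operatorname{rank}(P_i)$ is a positive integer, so $L \geqslant 1$ and $\min\{L^2,L\} = L \leqslant L^2$; thus $\varepsilon < \tfrac12 L^2$ is the \emph{weaker} condition, and whenever $L > 1$ the corollary as stated admits values of $\varepsilon$ not covered by Theorem \ref{ThmMain}. This is really a slip in the corollary's stated $\varepsilon$-range (it should read $\varepsilon < \tfrac12 L$), which the paper's own ``immediate from Theorem \ref{ThmMain}'' also glosses over; but since you explicitly asserted the implication, it is worth noting that the inequality goes the other way.
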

\begin{proof}
This is immediate from Theorem \ref{ThmMain}, since for orthogonal projections 
we have $\text{tr}\left(P_i\right)=\text{tr}\left(P_i^2\right)=M_{ii},$ and thus $L=\ell$.
\end{proof}

\begin{Example}   Consider the identity matrix $I_n\in \mathcal{CPSD}^n.$ A Gram representation is given by the elementary matrices $A_i \coloneqq E_{ii}\in \mathcal{S}_+^{n},$ and it is not hard to check that there is no Gram representation of smaller size, i.e.\ $\cpsdrank(I_n)=n$. The given $A_i$ are rank one projections, so we have $\ell=\max_i {\rm tr}(A_i)=1=L$ for the given representation. The first upper bound from Theorem \ref{ThmMain}/Corollary \ref{Cor} is not meaningful here, but the second is  $$\frac{6^4\log\left(n\left\lceil \frac{18}{\varepsilon^2} \right\rceil+1\right)}{\varepsilon^2},$$ which is smaller than $n$ for fixed $\varepsilon$ and large enough $n$. For example, for  $\varepsilon=1/2$ this happens at around $n=8\times 10^4,$ for $\varepsilon=1/10$ at around $n=2.9\times 10^6$.
\end{Example}

\begin{Example}
 Let $M\in\mathcal{CP}^n$ be a completely positive matrix. If we assume  all diagonal entries of $M$ to be  one, this means $M$ has a Gram representation by nonnegative unit vectors $v_1,\ldots, v_n$ (of some dimension). If we further understand these nonnegative vectors as diagonals of psd-matrices, we obtain a cpsd-decomposition of $M$  for which the constant $\ell$ from Theorem \ref{ThmMain} is precisely the maximum over all $1$-norms of the $v_i$. The first upper bound for the approximation thus becomes 
$$n\left\lceil \frac{9 \max_i \Vert v_i\Vert_1^2}{2\varepsilon^2}\right\rceil$$
which, depending on the $v_i$, might be much smaller than the only known upper bound to the cp/cpsd-rank so far, which is the actual cp-rank of $M$ and bounded by  ${n+1\choose 2}-4$ (see \cite{sb2}). Note again that the resulting approximation will again be completely positive.
\end{Example}

\begin{Example}
Let $a,b,c,d \in \mathbb{R}_{+}^n$ with strictly positive entries. Further, define $C \coloneqq \Diag(c_1, \ldots, c_n) \in \mathcal{S}^{n}_{+}$ and $D \coloneqq \Diag(d_1, \ldots, d_n) \in \mathcal{S}^{n}_{+}$. Then, by Proposition 2.1 in \cite{sb} the $n^2 \times 2n$ matrix
$$ V = \big(b \otimes C \: \big| \: D \otimes a \big)$$
generates a completely positive matrix $M = V^{t} V \in \mathcal{CP}^{2n}$ with $\cprank(M) = n^2$.

Now for $q \in (0,1)$ set $c,d \coloneqq (1,1,\ldots, 1)^t$, and $a,b \coloneqq (1-q) \cdot \left(1,q,q^2,q^3, \ldots, q^{n-1}\right)^t$. For the columns $v_i$ of $V$ it holds that
$$\Vert v_i \Vert_1 = (1-q) \cdot \sum_{k=0}^{n-1} q^k \leqslant 1.$$
Thus both $\ell$ and $L$ from Theorem \ref{ThmMain} are at most $1$, and hence, by the observation in ($i$), there exists a completely positive matrix $N \in \mathcal{CP}^{2n},$ which is an $\varepsilon$-approximation of $M$, and fulfills
$$\cpsdrank(N) \leqslant \cprank(N) \leqslant 2n \left\lceil \frac{9}{2\varepsilon^2} \right\rceil.$$
%and we obtain
%$$ M = V^t V = \left(\begin{array}{cc} L \cdot I_n & A \\ A & L \cdot I_n \end{array}\right)$$
%where $I_n$ is the $n \times n$ identity matrix,
%$$A := (Ca) \cdot (Db)^{t} = (Db) \cdot (Ca)^t = (1, q, q^2, \ldots, q^{n-1}) \cdot (1, q, q^2, \ldots, q^{n-1})^t,$$ and
%$$L := b^t b = a^t a = (1-q)^2 \cdot \sum_{k=0}^{r-1} q^{2k} < \frac{(1-q)^2}{1-q^2} \leq 1$$
\end{Example}

%\begin{Example}
%We review the Gram-Lorentz matrices from \cite{pra}. For any $m\geqslant 1$, the Lorentz cone is defined as $$\mathcal L_m:=\left\{ (x_1,\ldots, x_m)\in \R^m\mid x_1\geqslant 0, x:1^2\geqslant  x_2^2+\cdots +x_m^2 \right\}.$$ A matrix $M\in\mathcal S_+^n$ is called {\it Gram-Lorentz}, if it admits a Gram decomposition with vectors from some $\mathcal L_m$. Now the representation theory of Clifford algebras implies that  $\R^m$ can be  isometrically embedded into $\mathcal S^d$, where $d=2^{\lfloor\frac{m+1}{2} \rfloor},$ such that $\mathcal L_m$ maps to $\mathcal S_+^d$. In particular, each Gram-Lorentz matrix of size $n$ is cpsd. Theorem 18 in \cite{pra} states that the cpsd-rank of such a matrix $M$ is upper bounded by $$2^{\lfloor\frac{{\rm rank}(M)+3}{2} \rfloor}.$$ Now let's compare this is our approximation result. 
%\end{Example}

\section*{Acknowledgements} The first author gratefully acknowledges funding by the Austrian Academy of Sciences (\"OAW), through the DOC-Scholarship 25171. The second author gratefully acknowledges funding by the Austrian Science Fund (FWF), through project P 33122-N and funding of the Austrian Academy of Sciences (\"OAW) through the DOC scholarship 26547. We thank Werner Schachinger for a helpful comment on a previous version of this paper.

\newpage
\begin{bibdiv}
\begin{biblist}	
\bib{adi}{article}{
AUTHOR = {Adiprasito, K.},
AUTHOR={ B{\'{a}}r{\'{a}}ny, I.},
AUTHOR={Mustafa, N. H.},
TITLE = {Theorems of Carath{\'{e}}odory, Helly, and Tverberg without dimension},
JOURNAL = {Proc. Annu. ACM-SIAM Sympos. Discrete Algorithms},
YEAR = {2019},
PAGES = {2350--2360},
}
\bib{ber}{book}{
AUTHOR = {Berman, A.},
AUTHOR={Shaked-Monderer, N.},
TITLE = {Completely Positive Matrices},
PUBLISHER={World Scientific},
YEAR = {2003},
}

\bib{bom}{article}{
AUTHOR ={Bomze, I. M.},
AUTHOR={Schachinger, W.},
AUTHOR={Ullrich, R.},
TITLE = {New Lower Bounds and Asymptotics for the cp-Rank},
JOURNAL = {SIAM J. Matrix Anal. Appl.},
VOLUME = {36},
Number={1},
YEAR = {2015},
PAGES = {20--37},
}
\bib{bu}{article}{
AUTHOR = {Burgdorf, S.},
AUTHOR={Laurent, M.},
AUTHOR={Piovesan, T.},
TITLE = {On the closure of the completely positive semidefinite cone
	and linear approximations to quantum colorings},
JOURNAL = {Electron. J. Linear Algebra},
FJOURNAL = {Electronic Journal of Linear Algebra},
VOLUME = {32},
YEAR = {2017},
PAGES = {15--40},
}
\bib{dy}{article}{
AUTHOR={Dykema, K.},
AUTHOR={Paulsen, V. I.},
AUTHOR={Prakash, J.},
TITLE={Non-closure of the set of quantum correlations via graphs},
JOURNAL={Commun. Math. Phys.},
VOLUME={365},
YEAR={2019},
}
\bib{faw}{article}{
AUTHOR = {Fawzi, H.},
AUTHOR={Gouveia, J.},
AUTHOR={Parrilo, P. A.},
AUTHOR={Robinson, R.},
AUTHOR={Thomas, R. R.},
TITLE = {Positive semidefinite rank},
JOURNAL = {Math. Program.},
FJOURNAL = {Mathematical Programming},
VOLUME = {153},
YEAR = {2015},
NUMBER = {1, Ser. B},
}
\bib{fre}{article}{
AUTHOR = {Frenkel, P. E.},
AUTHOR={Weiner, M.},
TITLE = {On vector configurations that can be realized in the cone of positive matrices},
JOURNAL = {Linear Algebra Appl.},
FJOURNAL = {Linear Algebra and its Applications},
VOLUME = {459},
YEAR = {2014},
PAGES = {465--474},
}
\bib{san}{article}{
AUTHOR={Gribling, S.},
AUTHOR={de Laat, D.},
AUTHOR={Laurent, M.},
TITLE={Matrices with high completely positive semidefinite rank},
JOURNAL={Linear Algebra Appl.},
VOLUME={513},
YEAR={2017},
PAGES={122-148},
}

\bib{lau}{article}{
AUTHOR={Laurent, M.},
AUTHOR={Piovesan, T.},
TITLE={Conic approach to quantum graph parameters using linear
		optimization over the completely positive semidefinite cone},
JOURNAL={SIAM J. Optim.}, 
NUMBER={25},
YEAR={2015},
PAGES={2461--2493},
}
\bib{max}{article}{
AUTHOR={Maxfield, J. E.},
AUTHOR={Minc, H.},
TITLE={On the matrix equation $X'X=A$ },
JOURNAL={P. Edinburgh Math. Soc.}, 
VOLUME={13},
NUMBER={2},
YEAR={1962},
PAGES={125 --129},
}
\bib{pra}{article}{
AUTHOR={Prakash, A.},
AUTHOR={Sikora, J.},
AUTHOR={Varvitsiotis, A.},
AUTHOR={Wei, Z.},
TITLE={Completely positive semidefinite rank},
VOLUME={171},
JOURNAL={Math. Program.},
YEAR={2017},
NUMBER={1-2},
PAGES={397–431},
}

\bib{pd}{book}{
    AUTHOR = {Prestel, A.}
    AUTHOR={Delzell, Ch. N.},
     TITLE = {Positive polynomials},
    SERIES = {Springer Monogr. Math.},
      NOTE = {From Hilbert's 17th problem to real algebra},
 PUBLISHER = {Springer-Verlag, Berlin},
      YEAR = {2001},
     PAGES = {viii+267},
}

\bib{sch}{article} {
    AUTHOR = {Schachinger, W.},
     TITLE = {Lower bounds for maximal cp-ranks of completely positive
              matrices and tensors},
   JOURNAL = {Electron. J. Linear Algebra},
  FJOURNAL = {Electronic Journal of Linear Algebra},
    VOLUME = {36},
      YEAR = {2020},
     PAGES = {519--541},
}

\bib{sb}{article}{
author = {Shaked-Monderer, N.},
author = {Bomze, I.},
author = {Jarre, F.},
author = {Schachinger, W.},
year = {2013},
month = {04},
pages = {355-368},
title = {On the cp-Rank and Minimal cp Factorizations of a Completely Positive Matrix},
volume = {34},
journal = {SIAM J. Matrix Anal. Appl.},
doi = {10.1137/120885759}
}

\bib{sb2}{article}{
    AUTHOR = {Shaked-Monderer, N.},
    author={Berman, A.},
    author={Bomze, I.},
    author={Jarre, F.},
    author={Schachinger, W.},
     TITLE = {New results on the cp-rank and related properties of
              co(mpletely) positive matrices},
   JOURNAL = {Linear Multilinear Algebra},
  FJOURNAL = {Linear and Multilinear Algebra},
    VOLUME = {63},
      YEAR = {2015},
    NUMBER = {2},
     PAGES = {384--396},
 }

\bib{si}{article}{
	AUTHOR = {Sikora, J.},
	author = {Varvitsiotis, A.},
	TITLE = {Linear conic formulations for two-party correlations and values of nonlocal games},
	JOURNAL = {Mathematical Programming},
	VOLUME = {162},
	YEAR = {2017},
	PAGES = {431--463},
}

\bib{sl}{article}{
AUTHOR={Slofstra, W.},
TITLE={The set of quantum correlations is not closed},
JOURNAL={Forum Math. Pi},
VOLUME={7},
YEAR={2019},
PAGES={E1},
}
\bib{ude}{article}{
AUTHOR={Udell, M.},
AUTHOR={Townsend, A.},
TITLE={Why are Big Data Matrices Approximately Low Rank?},
JOURNAL={SIAM J. Math. Data Sci.},
VOLUME={1},
YEAR={2019},
PAGES={144-160},
}

\end{biblist}
\end{bibdiv}	
	
\end{document}